\newtheorem{theorem}{Theorem}[section]
\newtheorem{lemma}[theorem]{Lemma}
\newtheorem{corollary}[theorem]{Corollary}
\newtheorem{definition}[theorem]{Definition}
\newtheorem{example}[theorem]{Example}
\newtheorem{conjecture}[theorem]{Conjecture}
\newtheorem{remark}[theorem]{Remark}
\newenvironment{proof}{\noindent\textsc{Proof: }}
{\hspace{\stretch{1}}$\Box$\medskip}
\begin{document}

\title{A local criterion for Tverberg graphs}

\author{
Alexander Engstr\"om\footnote{The author was a Miller Research Fellow 2009-2011 at UC Berkeley, and gratefully acknowledges 
support from the Adolph C. and Mary Sprague Miller Institute for Basic Research in Science.}\\ 
Department of Mathematics\\
Aalto University\\
Helsinki, Finland\\
  {\tt alexander.engstrom@aalto.fi}}

\date\today

\maketitle

\abstract{
   The topological Tverberg theorem
   states that for any prime power $q$ 
   and continuous map from a
   $(d+1)(q-1)$-simplex to $\mathbb{R}^d$,
   there are $q$ disjoint faces $F_i$
   of the simplex whose images intersect.
   It is possible to put conditions on
   which pairs of vertices of the simplex
   that are allowed to be in the same face $F_i$.
   A graph with the same vertex set as the simplex, 
   and with two vertices adjacent if they should not
   be in the same $F_i$, is called a Tverberg graph
   if the topological Tverberg theorem still work.
   
   These graphs have been studied by Hell, Sch\"oneborn and Ziegler,
   and it is known that disjoint unions of small paths,
   cycles, and complete graphs are Tverberg graphs.
   We find many new examples by establishing a local criterion
   for a graph to be Tverberg. An easily stated corollary of our
   main theorem is that if the maximal 
   degree of a graph is $D$, and \mbox{$D(D+1)<q$}, then it is a
   Tverberg graph.
   
   We state the affine versions of our results and also describe how
   they can be used to enumerate Tverberg partitions.
}

\section{Introduction}

The topological Tverberg theorem is a popular
subject among researchers who work both with
topological and combinatorial methods. It
states that for any prime power $q$ and
continuous map $f$ from a  ${(d+1)(q-1)}$--dimensional 
simplex to $\mathbb{R}^d$, there are $q$ disjoint faces
$F_1,F_2,\ldots,F_q$ such that
$\cap_{i=1}^q f(F_i)$ is non-empty.

Vu\v ci\'c and \v Zivaljevi\'c \cite{VZ},
Sch\"oneborn and Ziegler \cite{SZ}, and 
Hell \mbox{\cite{H1,H2}} 
investigated
if it is possible to put further conditions 
on the faces $F_1,F_2,\ldots,F_q$.
A graph $G$ with the same vertex set as the
 ${(d+1)(q-1)}$--dimensional simplex,
is called a $(d,q)$--\emph{Tverberg graph} if we 
for any continuous map from the simplex
to $\mathbb{R}^d$ can find disjoint faces 
$F_1,F_2,\ldots F_q$ whose images intersect
and moreover vertices adjacent in $G$
are in different $F_i$:s. When the dimension
of the space which the simplex is mapped into
and the number of disjoint faces is clear,
the $(d,q)$ in front of ``Tverberg graph''
is dropped.

Hell \cite{H2} proved that
one can build up a Tverberg graph
as a disjoint union of complete graphs
with less than $q/2+1$ vertices, stars
with less than $q$ vertices, and paths and
cycles when $q>4$. He used connectivity
results on chessboard complexes from \cite{BLVZ}
in the spirit of \cite{VZ2} for the complete
graphs and explicit calculations for the
other classes.

In a graph the set of vertices adjacent to a vertex $v$ is denoted $N(v)$; and the set of vertices on distance two from $v$ is
$N^2(v)=  ( \cup_{u\in N(v)} N(u) ) \setminus ( N(v) \cup \{v\} ).$

The main theorem of the paper gives a local condition on graphs which guarantee that they are Tverberg graphs.
\newline
\newline
{\bf Theorem \ref{mainTheorem}} 
\emph{Let $q\geq 2$ be a prime power, $d\geq 1$, and $G$ a graph on $(d+1)(q-1)+1$ vertices such that  
\[q> |N^2(v)| + 2|N(v)| \]
for every vertex $v$. Then $G$ is a $(d,q)$--Tverberg graph.}
\newline
\newline
\indent
A weaker but easily applied corollary follows from the main theorem.
\newline
\newline
{\bf Corollary \ref{mainCorollary}} 
\emph{Let $q\geq 2$ be a prime power, $d\geq 1$, and $G$ a graph on $(d+1)(q-1)+1$ vertices such that its maximal degree $D$ satisfy $D(D+1)<q$. Then $G$ is a $(d,q)$--Tverberg graph.}
\newline
\newline
\indent
When there are no edges in $G$ this is the
ordinary topological Tverberg theorem.

\begin{example}
\emph{
With the parameters $d=2$ and $q=2^4$, the ordinary
topological Tverberg theorem states that for any continuous map $f$ from a  $45$--dimensional 
simplex to $\mathbb{R}^2$, there are $16$ disjoint faces
$F_1,F_2,\ldots,F_{16}$ of the simplex such that
$\cap_{i=1}^{16} f(F_i)$ is non-empty.}

\begin{figure}
\centering 
\includegraphics[width=8cm]{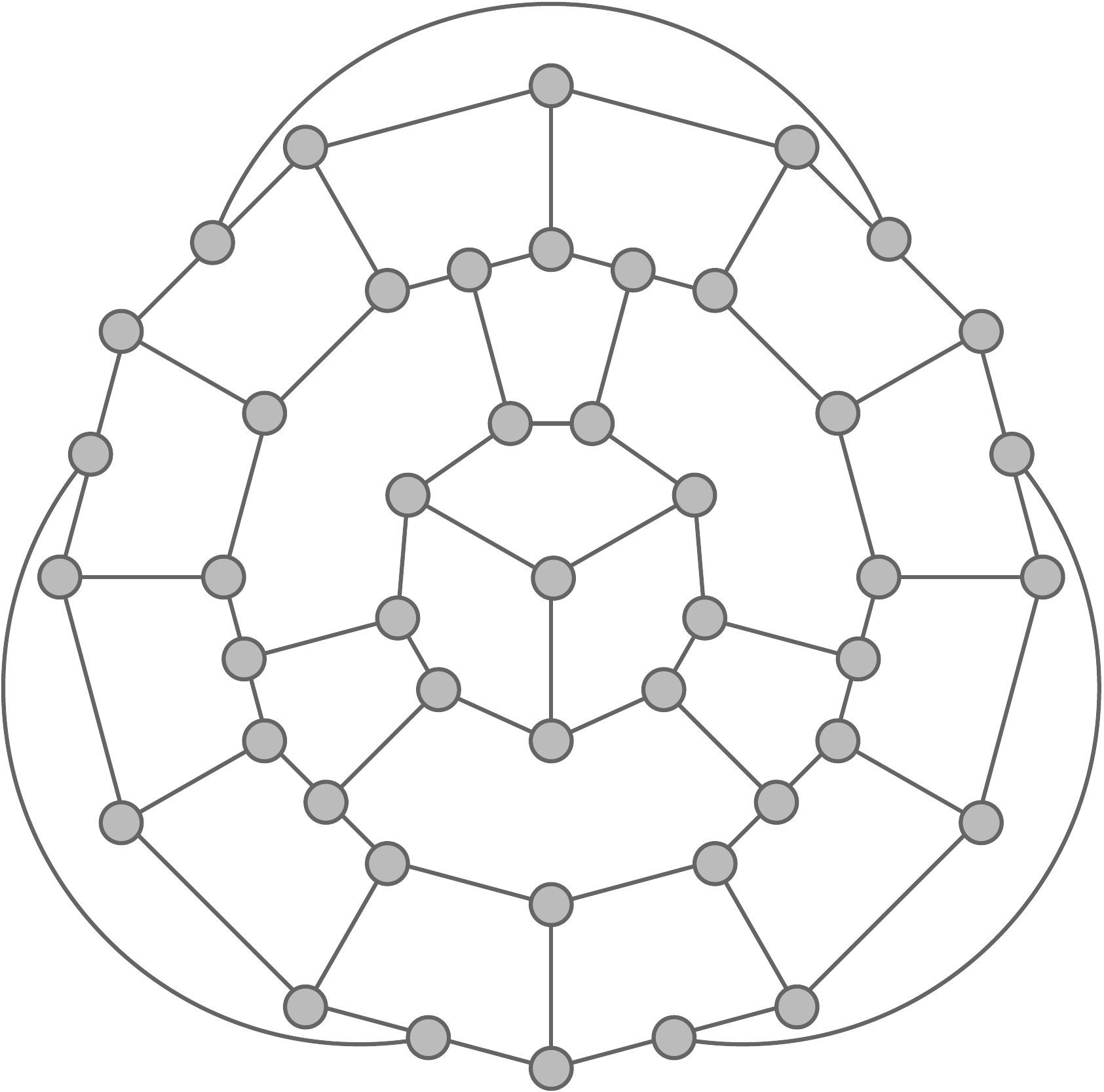}
\caption{The Grinberg graph}  \label{Grinberg}
\end{figure} 

\emph{
The \emph{Grinberg graph} (Figure~18.9 of \cite{BM}) is a 3-regular graph on 46 vertices described in Figure~\ref{Grinberg}. 
Identify the vertices of the graph with those of a $45$--dimensional simplex.
Let $D=3$
be the maximal degree of the Grinberg graph. 
The condition $D(D+1)=3\cdot 4=12<15=2^4-1=q-1$ is satisfied and we can for any continuous map $f$ apply Corollary~\ref{mainCorollary}.
We get 16 disjoint faces $F_1,F_2,\ldots,F_{16}$ of the simplex such that both
\begin{itemize}
\item the intersection $\cap_{i=1}^{16} f(F_i)$ is non-empty, and
\item any two vertices in the same $F_i$ are \emph{not} adjacent in the Grinberg graph.
\end{itemize}}
\end{example}

\begin{remark}
\emph{
After this paper was submitted, Blagojevi\'c, Matschke and Ziegler made significant progress on the colored Tverberg problem \cite{BLZ1,BLZ2}.
They proved that disjoint unions of complete graphs on at most $q-1$ vertices are Tverberg graphs, the optimal result.
For the particular case of disjoint unions of complete graphs, our Theorem~\ref{mainTheorem} only allow complete graphs
with at most $(q-1)/2$ vertices, as proved before in \cite{VZ2}.
}
\end{remark}

\subsection{Notation}

The notation and methods used 
in this paper are described by\linebreak
Bj\"orner~\cite{B} and Matou\v sek~\cite{M}.
Everything what is needed about the independence
complexes used in the next section is
included there, but for the larger
picture see for example 
Dochtermann, Engstr\"om~\cite{DE,E} and their references.

\section{Independence complex formulation}

All graphs are finite and simple. The
vertex set of $G$ is $V(G)$ and the
edge set $E(G)$. The neighborhood $N_G(v)$
of a vertex $v$ of $G$ is the set of
vertices adjacent to $v$ in $G$. Sometimes
the $G$ in $N_G(v)$ is dropped.
The induced subgraph of $G$ on $S\subseteq V(G)$
has vertex set $S$ and all possible edges
of $G$ with those vertices. A subset $S$ of
$V(G)$ is independent if the induced subgraph
of $G$ on $S$ lacks edges.

The \emph{independence complex} ${\tt Ind}(G)$ of a graph
$G$ is the simplicial complex with the same vertex
set as $G$ and its faces are the independent
sets of $G$. The following graph product will be used
many times to produce new graphs.

\begin{definition}
The cartesian product $G \square H$ of two graphs $G$ and $H$ is a graph with vertex set $V(G) \times V(H)$ and edge set
\[ \{ \{(v_1,w),(v_2,w)\} \mid \{v_1,v_2\} \in E(G), w\in V(H) \} \]
union
\[ \{ \{(v,w_1),(v,w_2)\} \mid  v\in V(G),  \{w_1,w_2\} \in E(H) \}. \]
\end{definition}

Another way to introduce $G \square H$, is as the 1-skeleton of the polyhedral product of $G$ and $H$ seen as cell complexes.
A basic but sometimes useful fact is that $G \square H$Êand $H \square G$ are isomorphic.
For us, one of the graphs in the product will almost always be a complete one. In Figure~\ref{produktFigur} is
the product of a path on six vertices and a complete graph on four vertices.

\begin{figure}
\centering 
\includegraphics[width=8cm]{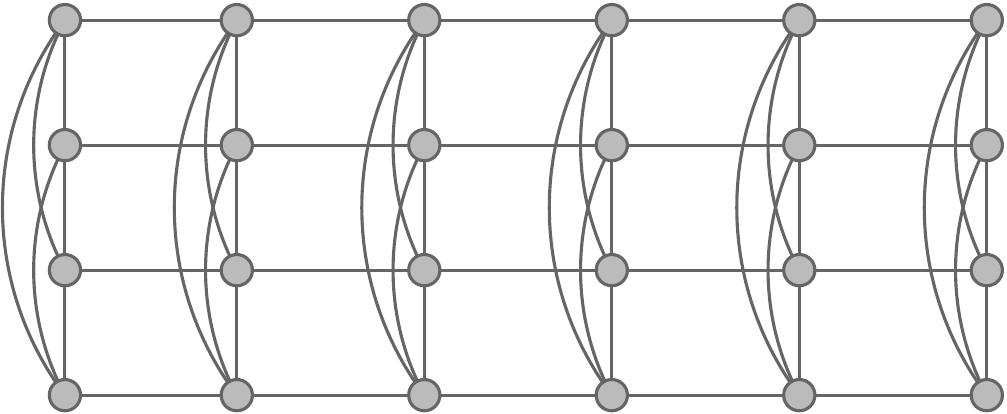}
\caption{The product of a path on six vertices and a complete graph on four vertices. Each copy of the path constitutes a level of the product.}  \label{produktFigur}
\end{figure} 

If $G$ is a graph with vertex set $\{0,1,2,\ldots,N\}$
and no edges, then the distinct vertices
$(u,i)$ and $(v,j)$ of $G\square K_q$ can only be in
the same face $\sigma\in {\tt Ind}(G\square K_q)$
if $u\neq v$. Another way to state it, is
that ${\tt Ind}(G\square K_q)$ is the $q$-fold
2-wise deleted join of the simplex
$\sigma^N$ with vertex set $\{0,1,2,\ldots,N\}$.
In short, 
${\tt Ind}(G\square K_q)=(\sigma^N)^{\ast q}_{\Delta(2)}$.
This is important because $(\sigma^N)^{\ast q}_{\Delta(2)}$
is central in the textbook proof \cite{M} of the
topological Tverberg theorem.

An important property of ${\tt Ind}(G\square K_q)$ for
$q=p^r$, where $p$ is a prime, is that the
 action of $\mathbb{Z}_p\times \cdots
\times \mathbb{Z}_p$ on $V(G\square K_q)$ by permuting
the second component of the vertices,
induces a fixed-point free action on 
${\tt Ind}(G\square K_q)$. 

A simplicial complex $\Delta$ is $n$--\emph{connected}
if for all $0\leq i\leq n$ any continuous map from
the $i$--sphere to $\Delta$ can be extended to
a continuous map from the $(i+1)$--ball to $\Delta$.
Note the degenerate case that $\Delta$ is $(-1)$--connected
if it is non-empty.

Hell \cite{H1,H2}, modified B\'ar\'any, Shlosman,
Sz\H ucs \cite {BSS}, Sarkaria \cite{S}
 and Volovikov's arguments \cite{V} to prove:

\begin{theorem}\label{modTver}
  Let $q\geq 2$ be a prime power, $d\geq 1$, and
  $N=(d+1)(q-1)$. If 
  $G$ is a graph with $N+1$ vertices
  such that ${\tt Ind}(G\square K_q)$ is $(N-1)$--connected
  then $G$ is a $(d,q)$--Tverberg graph.
\end{theorem}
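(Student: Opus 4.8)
The plan is to run the configuration-space/test-map scheme of the topological Tverberg theorem, using ${\tt Ind}(G\square K_q)$ as the configuration space in place of the ordinary deleted join, and to close with an equivariant nonexistence result of Volovikov type. Fix a continuous map $f\colon \sigma^N\to\mathbb{R}^d$. A face of ${\tt Ind}(G\square K_q)$ records, for each level $i\in\{1,\dots,q\}$, an independent set of $G$, with the sets at distinct levels pairwise disjoint (because the copies $(v,1),\dots,(v,q)$ of a single vertex span a $K_q$); equivalently a point of ${\tt Ind}(G\square K_q)$ encodes pairwise disjoint $G$-independent faces $F_1,\dots,F_q$ of $\sigma^N$, interior points $x_i\in F_i$, and join weights $\lambda_i\geq 0$ with $\sum_i\lambda_i=1$. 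Write $q=p^r$ and recall that $\mathbb{Z}_p\times\cdots\times\mathbb{Z}_p$ acts freely by permuting the second coordinate, i.e.\ the levels.

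Let $W_q=\{x\in\mathbb{R}^q : \sum_i x_i=0\}$, the $(q-1)$-dimensional reduced regular representation of $\mathbb{Z}_p^{\,r}$, and set $W=W_q^{\oplus(d+1)}$, so that $\dim W=(d+1)(q-1)=N$. I would define a test map $t\colon {\tt Ind}(G\square K_q)\to W$ by sending the configuration above to the image of $\big(\lambda_1(f(x_1),1),\dots,\lambda_q(f(x_q),1)\big)\in(\mathbb{R}^{d+1})^q$ under the projection that subtracts the average in each of the $d+1$ slots. The homogenizing last coordinate makes $t$ well defined and continuous even where some $\lambda_i$ vanish, and $t$ is $\mathbb{Z}_p^{\,r}$-equivariant. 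Its zeros are exactly the configurations in which all the vectors $(\lambda_i f(x_i),\lambda_i)$ agree; this forces $\lambda_1=\cdots=\lambda_q=1/q$ and $f(x_1)=\cdots=f(x_q)$, that is, a Tverberg partition all of whose blocks are independent in $G$. So it suffices to show that $t$ has a zero.

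Suppose not. Then $t/\|t\|$ is a $\mathbb{Z}_p^{\,r}$-equivariant map from ${\tt Ind}(G\square K_q)$ to the sphere $S(W)\cong S^{N-1}$. The domain is $(N-1)$-connected by hypothesis, while the target is a fixed-point-free sphere of dimension $N-1$: since $W$ contains no trivial subrepresentation, its only $\mathbb{Z}_p^{\,r}$-fixed vector is $0$, which does not lie on the sphere. Volovikov's theorem \cite{V} for the elementary abelian group $\mathbb{Z}_p^{\,r}$ (in the form presented in \cite{M}) then asserts that no such equivariant map can exist, giving the desired contradiction.

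I expect the crux to be the prime-power step rather than the test-map construction. For composite $q$ the cyclic group $\mathbb{Z}_q$ does not act fixed-point-freely on $S(W)$, so the classical $\mathbb{Z}_p$-Borsuk--Ulam or Dold argument is unavailable; one genuinely needs the elementary abelian group $\mathbb{Z}_p^{\,r}$ and the cohomological index behind Volovikov's theorem, which is exactly where the prime-power hypothesis on $q$ is spent. The second point requiring care is the dimension bookkeeping: the target sphere has dimension $N-1$, matching the connectivity $N-1$ precisely because of the extra homogeneous coordinate --- the same coordinate that forces the weights $\lambda_i$ to coincide and hence upgrades a mere coincidence of the values $f(x_i)$ into an honest Tverberg partition.
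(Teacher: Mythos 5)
Your proposal is correct and is essentially the paper's own argument: the paper proves Theorem~\ref{modTver} by citing Matou\v sek's configuration-space/test-map proof of the topological Tverberg theorem with $(\sigma^N)^{\ast q}_{\Delta(2)}$ replaced by ${\tt Ind}(G\square K_q)$, invoking Volovikov's theorem for the prime power case exactly as you do. You have simply written out in detail the blueprint the paper points to, with the correct test map, target representation, and dimension count.
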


However, it was not formulated with independence complexes.
Using the proof of the topological Tverberg theorem in
Matou\v sek's book (Theorem 6.4.2, page 162, \cite{M}) as a blue-print, and
substituting $(\sigma^N)^{\ast q}_{\Delta(2)}$ by
${\tt Ind}(G\square K_q)$, one gets a proof of Theorem~\ref{modTver}
in the $q$ prime case. Matou\v sek also describes (in the notes, page 164--165, \cite{M}) the 
straightforward generalization of the topological Tverberg theorem
to the $q$ prime power case and the full proof of
Theorem~\ref{modTver} follows with similar modifications.

\section{Connectivity of ${\tt Ind}(G\square K_q)$ and squids.}

In this section we first prove two reduction
lemmas for independence complexes and then use
them to calculate the connectivity of\linebreak ${\tt Ind}(G\square K_q)$.
The reduction procedure is done by removing
certain sets of vertices called squids. 
Finally we give the proof of Theorem~\ref{mainTheorem}
and state some conjectures.

\subsection{Connectivity calculations}

\begin{lemma}\label{red1}
  Let $v$ be a vertex of $H$.
  If ${\tt Ind}(H\setminus \{v\})$ is $n$--connected
  and ${\tt Ind}(H\setminus (N(v)\cup \{v\})  )$ is $(n-1)$--connected
  then ${\tt Ind}(H)$ is $n$--connected.
\end{lemma}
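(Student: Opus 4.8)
The plan is to prove the statement about independence complexes via a Mayer–Vietoris type decomposition combined with the standard long exact sequence in homotopy (or the gluing lemma for connectivity). The key observation is that removing a single vertex $v$ from a graph $H$ relates the independence complex ${\tt Ind}(H)$ to two natural subcomplexes: the faces of ${\tt Ind}(H)$ that do \emph{not} contain $v$, which is exactly ${\tt Ind}(H\setminus\{v\})$, and the closed star of $v$, which consists of all independent sets that either contain $v$ or can be extended by $v$.

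First I would set up the decomposition. Let $\Delta = {\tt Ind}(H)$, let $A = {\tt Ind}(H\setminus\{v\})$ (the faces avoiding $v$), and let $B$ be the closed star ${\tt st}(v)$ of $v$ in $\Delta$, consisting of every face $\sigma$ such that $\sigma\cup\{v\}$ is still independent. Then $\Delta = A \cup B$, since every independent set either omits $v$ (and lies in $A$) or contains $v$ (and lies in $B$). The closed star $B$ is a cone with apex $v$, hence contractible, so it is $n$--connected for every $n$. The intersection $A\cap B$ is the set of faces not containing $v$ but lying in the star of $v$, i.e. the independent sets $\sigma$ with $v\notin\sigma$ and $\sigma\cup\{v\}$ independent; these are precisely the independent sets of $H$ disjoint from $N(v)\cup\{v\}$, so $A\cap B = {\tt Ind}(H\setminus(N(v)\cup\{v\}))$. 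This is the link ${\tt lk}(v)$ of $v$.

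Next I would invoke the standard connectivity gluing principle: if $\Delta = A\cup B$ where $A$ is $n$--connected, $B$ is $n$--connected, and $A\cap B$ is $(n-1)$--connected, with all spaces nonempty (or at least handling the low-dimensional cases directly), then $\Delta$ is $n$--connected. This follows from the Mayer–Vietoris sequence together with van Kampen's theorem for the fundamental group and the Hurewicz theorem to bootstrap to higher homotopy; alternatively one cites it as the nerve/gluing lemma found in Bj\"orner~\cite{B}. Feeding in the hypotheses---$A = {\tt Ind}(H\setminus\{v\})$ is $n$--connected by assumption, $B$ is contractible hence $n$--connected, and $A\cap B = {\tt Ind}(H\setminus(N(v)\cup\{v\}))$ is $(n-1)$--connected by assumption---yields that ${\tt Ind}(H)$ is $n$--connected, as desired.

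The main obstacle, and the part requiring care rather than routine computation, is the precise formulation and justification of the gluing principle across all the relevant connectivity degrees, especially the boundary behavior at small $n$ and the degenerate case $n=-1$ flagged earlier in the paper (where $(-1)$--connectedness just means nonemptiness). One must confirm that the subcomplexes $A$ and $B$ are genuinely closed subcomplexes covering $\Delta$ so that the Mayer–Vietoris sequence and the van Kampen argument apply, and verify that the homotopy pushout square is well behaved. Once the gluing lemma is correctly stated with the hypotheses ``$A,B$ are $n$--connected and $A\cap B$ is $(n-1)$--connected,'' the conclusion follows immediately, so the real work is entirely in identifying the three pieces correctly and citing or establishing the gluing lemma at the appropriate level of generality.
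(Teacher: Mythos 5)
Your proposal is correct and is essentially identical to the paper's own proof: both decompose ${\tt Ind}(H)$ into the closed star of $v$ (a cone, hence contractible) and ${\tt Ind}(H\setminus\{v\})$, identify the intersection as ${\tt Ind}(H\setminus(N(v)\cup\{v\}))$, and apply the gluing lemma from Bj\"orner (Lemma 10.3~(ii)). No further changes are needed.
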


\begin{proof}
  We will use a simple gluing result (Lemma 10.3 (ii), \cite{B}): If
  $\Delta_1$ and $\Delta_2$ are $n$--connected and $\Delta_1\cap
  \Delta_2$ is $(n-1)$--connected, then $\Delta_1\cup\Delta_2$ 
  is $n$--connected.

  The complex $\Delta_1=\{ \sigma\in {\tt Ind}(H) \mid
       \sigma\cup\{v\}\in {\tt Ind}(H) \}$ 
  is $n$--connected since it is a cone with apex $v$.
  The complex $\Delta_2=\{ \sigma\in {\tt Ind}(H) \mid
    v\not\in \sigma\}= {\tt Ind}(H\setminus \{v\}) $
   is $n$--connected by assumption, and
   $\Delta_1\cap \Delta_2= {\tt Ind}(H\setminus (N(v)\cup \{v\})  )$
   is $(n-1)$--connected by assumption.
   Hence ${\tt Ind}(H) = \Delta_1\cup \Delta_2$
   is $n$--connected.
\end{proof}

\begin{lemma}\label{red2}
   Let $v$ be a vertex of $H$ whose
   neighborhood is a complete graph. If
   ${\tt Ind}(H\setminus (N(v)\cup N(u)))$ is
   $(n-1)$--connected for all $u\in N(v)$ then
   ${\tt Ind}(H)$ is $n$--connected.
\end{lemma}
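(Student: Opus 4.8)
The plan is to cover ${\tt Ind}(H)$ by contractible cone subcomplexes indexed by the clique $W=N(v)\cup\{v\}$ and to glue them together one at a time using the same gluing result employed in Lemma~\ref{red1}. For each vertex $w\in W$ I set $C_w=\{\sigma\in{\tt Ind}(H)\mid \sigma\cup\{w\}\in{\tt Ind}(H)\}$, which is a cone with apex $w$ and hence contractible; note that $C_w={\tt Ind}(H\setminus N(w))$. Writing $N(v)=\{u_1,\dots,u_k\}$, my first task is to check that $\{C_w\}_{w\in W}$ actually covers ${\tt Ind}(H)$. Given an independent set $\sigma$, either $\sigma$ misses $N(v)$ entirely, so $\sigma\in C_v$, or $\sigma$ contains a single $u_i$ (single because $N(v)$ is complete), and then $\sigma\in C_{u_i}$ since an independent set containing $u_i$ cannot meet $N(u_i)$.

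The key computation is to identify the relevant overlaps. Since $C_w={\tt Ind}(H\setminus N(w))$, one has $C_v\cap C_{u_i}={\tt Ind}(H\setminus(N(v)\cup N(u_i)))$, which is exactly the complex the hypothesis declares $(n-1)$--connected. Because $N(v)$ is complete and $v$ is adjacent to $u_i$, every $u_j$ with $j\neq i$ lies in $N(u_i)$, so $N(v)\setminus\{u_i\}\subseteq N(u_i)$ and therefore
\[ N(v)\cup N(u_i) = N(u_i)\cup\{u_i\}. \]
This identity is the heart of the argument: it shows that $C_v\cap C_{u_i}$ is precisely the base ${\tt Ind}(H\setminus(N(u_i)\cup\{u_i\}))$ of the cone $C_{u_i}$, and in particular that this entire base already sits inside $C_v$.

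With this in hand I would glue in the order $C_v,C_{u_1},\dots,C_{u_k}$, setting $U_0=C_v$ and $U_i=U_{i-1}\cup C_{u_i}$, and prove inductively that each $U_i$ is $n$--connected. The base case $U_0=C_v$ is contractible. For the inductive step the point is that any face of $C_{u_i}$ containing $u_i$ lies in no earlier piece: it cannot be in $C_v$ or in any $C_{u_j}$ with $j<i$, because $u_i$ belongs both to $N(v)$ and to each $N(u_j)$. Hence the overlap collapses onto the base, $U_{i-1}\cap C_{u_i}={\tt Ind}(H\setminus(N(v)\cup N(u_i)))$. Since this is $(n-1)$--connected by hypothesis, $U_{i-1}$ is $n$--connected by induction, and $C_{u_i}$ is contractible, the gluing result (Lemma 10.3(ii) of \cite{B}) shows $U_i$ is $n$--connected. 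After $k$ steps $U_k={\tt Ind}(H)$ and the proof is complete.

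I expect the main obstacle to be bookkeeping rather than topology: the delicate point is pinning down that the successive overlaps reduce to the single hypothesis complex instead of to a messier union involving the ${\tt Ind}(H\setminus(N(u_i)\cup N(u_j)))$, which the hypothesis does not control. The completeness of $N(v)$ is exactly what rules out those uncontrolled intersections, through the displayed identity, and the verification that the faces containing $u_i$ are genuinely new at each stage is the step that makes the clean gluing rigorous.
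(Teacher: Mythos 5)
Your proof is correct. The covering family you use is exactly the one in the paper's own proof: there one sets $\Delta_0={\tt Ind}(H\setminus N(v))$ and $\Delta_u={\tt Ind}(H\setminus N(u))$ for $u\in N(v)$, which are precisely your $C_v$ and $C_{u_i}$, and the combinatorial facts you verify (that these cones cover ${\tt Ind}(H)$, that $C_{u_j}\cap C_{u_i}$ lands inside $C_v$, and that $C_v\cap C_{u_i}={\tt Ind}(H\setminus(N(v)\cup N(u_i)))$) are exactly the hypotheses the paper checks. Where you genuinely diverge is in the gluing machinery: the paper feeds this cover into Bj\"orner's Lemma 10.4(ii), which in a single step yields the homotopy equivalence ${\tt Ind}(H)\simeq\bigvee_{u\in N(v)}\textrm{susp}\bigl({\tt Ind}(H\setminus(N(v)\cup N(u)))\bigr)$ and hence the connectivity, whereas you add the cones one at a time and iterate the elementary two-complex gluing result (Lemma 10.3(ii) of \cite{B}) already used in Lemma~\ref{red1}. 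Your route is more self-contained --- it needs only the weaker lemma --- at the price of an induction and the bookkeeping that each successive overlap $U_{i-1}\cap C_{u_i}$ collapses to the single hypothesis complex; your identity $N(v)\cup N(u_i)=N(u_i)\cup\{u_i\}$ is exactly what makes that collapse work, and it is the same fact underlying the paper's verification that $\Delta_0\supseteq\Delta_u\cap\Delta_w$. What the paper's route buys is the precise homotopy type as a wedge of suspensions, which is strictly more information than the connectivity statement requires; what yours buys is economy of tools.
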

\begin{proof}
   This is Lemma 10.4 (ii) of \cite{B}: If
   $\Delta=\Delta_0\cup\Delta_1\cup\cdots\cup\Delta_m$
   is a simplicial complex with contractible
   subcomplexes $\Delta_i$ for all $0\leq i \leq m$,
   and  $\Delta_0 \supseteq \Delta_i\cap \Delta_j$
   for all $1\leq i<j\leq m$, then $\Delta$
   is homotopy equivalent with
   $$ \bigvee_{1\leq i\leq m} \textrm{susp}(\Delta_0 \cap
   \Delta_i). $$
   We will use a connectivity version of the lemma.
   If all $\Delta_0 \cap \Delta_i$ are \mbox{$(n-1)$}--connected,
   then their suspensions are $n$--connected, the
   wedge of the suspensions is $n$--connected, and by
   homotopy equivalence so is $\Delta$.

   Let $\Delta_0= {\tt Ind}(H\setminus (N(v)))$ and
   $\Delta_u={\tt Ind}(H\setminus (N(u)))$ for
   $u\in N(v)$. All of these complexes are
   contractible since they are cones. 

   If
   $\sigma\in {\tt Ind}(H)$ and one can add $v$ to
   $\sigma$ and stay in ${\tt Ind}(H)$, then
   $\sigma\in \Delta_0$. If one cannot add $v$ to
   $\sigma$ and stay in ${\tt Ind}(H)$, then there
   is a vertex $u\in N(v)$ which is in $\sigma$,
   and $\sigma\in \Delta_u$. Thus
   ${\tt Ind}(H)=\Delta_0\cup\bigcup_{u\in N(v)} \Delta_u$.
   If $u$ and $w$ are different vertices in
   $N(v)$ and $\sigma \in \Delta_u\cap\Delta_w$ then
   $\sigma\cap N(v) = \emptyset$ and we can add
   $v$ to $\sigma$ without falling out of ${\tt Ind}(H)$.
   This shows that $\Delta_0\supseteq \Delta_u\cap \Delta_w$.
   By assumption $\Delta_0\cap\Delta_u={\tt Ind}(H\setminus (N(v)\cup N(u)))$ is
   $(n-1)$--connected for all $u\in N(v)$.
\end{proof}

\begin{definition}
A \emph{squid} with \emph{body} $w$ in $G\square K_q$ is a subset of\linebreak $V(G\square K_q)$ that  is either
\begin{itemize}
\item[(i)] a \emph{subset} of
     \[ (N_G(v)\cup N_G(w)) \times\{i\} \cup \{w\}\times\{1,2,\ldots, q\} \]
for two adjacent vertices $v$ and $w$, and $1\leq i\leq q$, or
\item[(ii)] a \emph{subset} of
     \[ N_G(w)\times\{i,j\} \cup \{w\}\times\{1,2,\ldots, q\} \]
where $1\leq i<j\leq q$.
\end{itemize}
The vertices not of the form $(w,k)$ are \emph{arms}.
\end{definition}

In Figure \ref{squids} are examples of squids of both types. Note that the body of a squid is part of the data defining it, and that
two squids on the same subset of $V(G\square K_q)$ could have different bodies. For a squid $S$ in $G\square K_q$
we relax the notation and write $G\square K_q \setminus S$ for the removal of the subset of $V(G\square K_q)$
defining the squid from the graph $G\square K_q$.

\begin{figure}
\centering 
\includegraphics[width=8cm]{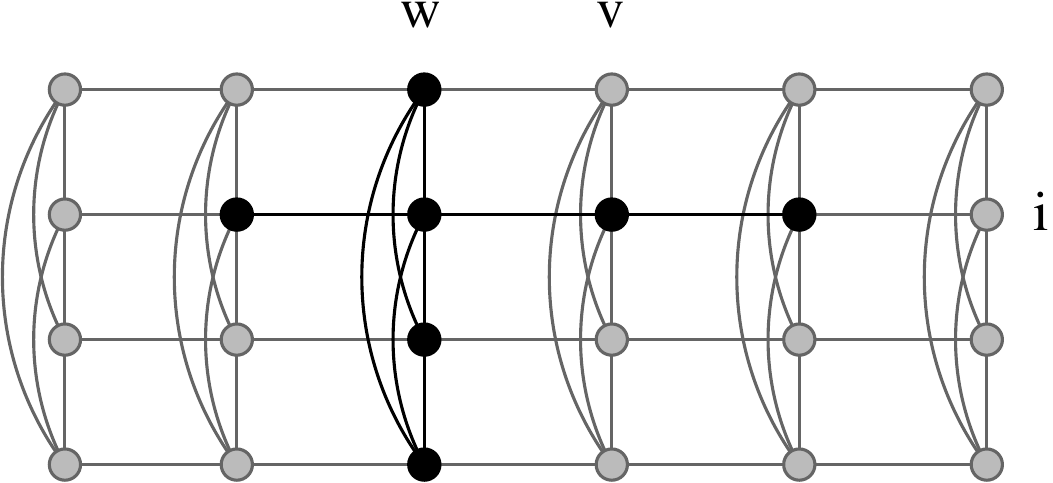}
\[ \begin{array}{c} \, \\ \, \end{array} \]
\includegraphics[width=8cm]{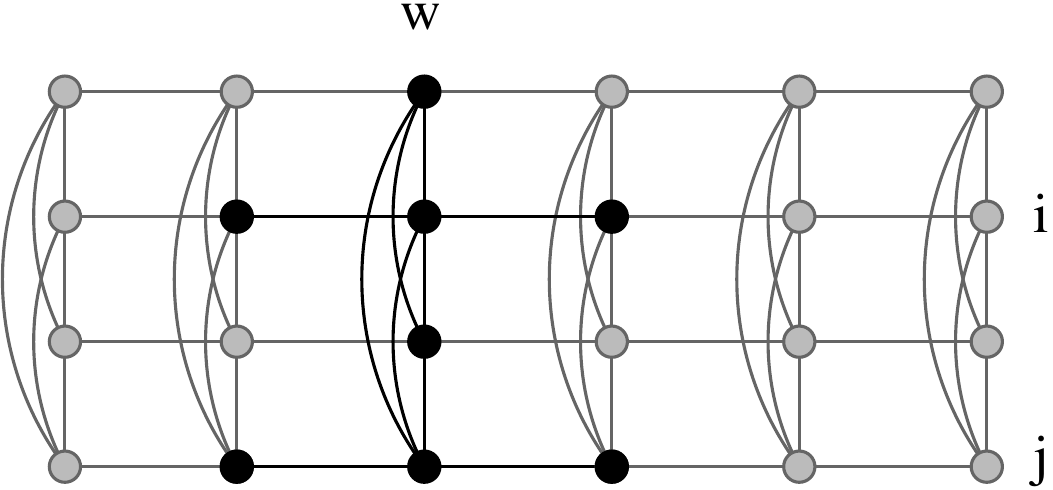}
\caption{Squids of types (i) and (ii) are drawn in black.}  \label{squids}
\end{figure} 

\begin{lemma}
\label{squidLemma}
Let $G$ be a graph with more than $m$ vertices and\linebreak $S_1, S_2, \ldots, S_{m}$ squids in $G\square K_q$ with different bodies. If $q> |N^2(v)| + 2|N(v)|$ for every vertex $v$ of $G$ then $G\square K_q \setminus \cup_{i=1}^m S_m$ is non-empty.
\end{lemma}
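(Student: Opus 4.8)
The plan is to exhibit a single vertex $(u,k)$ of $G\square K_q$ that survives the removal. First I would use that $G$ has more than $m$ vertices together with the hypothesis that $S_1,\ldots,S_m$ have pairwise different bodies: each squid singles out exactly one body vertex of $G$, so at most $m$ vertices of $G$ occur as bodies, and hence there is a vertex $u$ of $G$ that is the body of no squid. For such a $u$, no vertex $(u,k)$ is a body vertex $(w_i,\cdot)$ of any squid, so $(u,k)$ can only be deleted by being an \emph{arm} of some $S_i$. I will call a level $k$ \emph{bad} if $(u,k)$ is an arm of some squid, and the goal becomes showing that strictly fewer than $q$ levels are bad.

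Next I would record precisely when a squid can make a level bad for this fixed $u$. For a type (i) squid with body $w$, adjacent vertex $v$, and level $\ell$, the arms lie in $(N(v)\cup N(w))\times\{\ell\}$, so $(u,k)$ is an arm only when $k=\ell$ and $u\in N(v)\cup N(w)$; such a squid makes at most one level bad. For a type (ii) squid with body $w$ at levels $i<j$, the arms lie in $N(w)\times\{i,j\}$, so $(u,k)$ is an arm only when $k\in\{i,j\}$ and $u\in N(w)$; such a squid makes at most two levels bad. The crucial point is the location of the body: if $u\in N(w)$ then $w\in N(u)$, whereas if $u\in N(v)$ with $v$ adjacent to $w$ then $w$ lies within distance two of $u$ and, being distinct from the non-body $u$, satisfies $w\in N(u)\cup N^2(u)$. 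Thus any squid that makes a level bad has its body in $N(u)\cup N^2(u)$.

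The heart of the argument is the count, which must match the asymmetric bound $|N^2(v)|+2|N(v)|$ rather than the cruder $2(|N(v)|+|N^2(v)|)$. The decisive observation is that a type (ii) squid, the only kind contributing two bad levels, forces $u\in N(w)$ and hence $w\in N(u)$, so its body cannot lie in $N^2(u)$. I would therefore split the squids that affect $u$ according to the location of their (distinct) bodies. Those with body in $N^2(u)$ are necessarily type (i) and contribute at most one bad level each, for a total of at most $|N^2(u)|$. Those with body in $N(u)$ contribute at most two bad levels each, for a total of at most $2|N(u)|$. Adding these, the number of bad levels is at most $|N^2(u)|+2|N(u)|$, which is strictly less than $q$ by hypothesis. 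Consequently some level $k$ is not bad, the vertex $(u,k)$ survives, and $G\square K_q\setminus\bigcup_{i=1}^m S_i$ is non-empty.

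I expect the main obstacle to be exactly this bookkeeping: one has to see that the factor of two is needed only for type (ii) squids, whose bodies are forced to be neighbors of $u$, so that the distance-two bodies enter with multiplicity one and the bound stays at $|N^2(u)|+2|N(u)|$. Getting the arm classification exactly right, and verifying that a non-body $u$ truly avoids every body vertex, are the only places demanding care; the remaining steps are routine.
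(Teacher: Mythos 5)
Your proposal is correct and follows essentially the same argument as the paper: pick a non-body vertex $u$, classify the squids whose arms can hit the column $\{u\}\times\{1,\ldots,q\}$ by the location of their bodies, and use that the doubly-contributing type (ii) squids are forced to have bodies in $N(u)$ to get the bound $|N^2(u)|+2|N(u)|<q$. The only cosmetic difference is that the paper organizes the count by squid type ($a+b+2c\leq a+2(b+c)$) rather than by body location, but the underlying bookkeeping is identical.
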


\begin{proof}
There is a vertex $v$ of $G$ which is not the body of a squid.
We will prove that 
$$ \{(v,1),(v,2),\ldots,(v,q)\} \setminus \cup_{i=1}^m S_m$$ 
is non-empty. Assume that out of the squids $S_1, S_2, \ldots, S_{m}$
with an arm $(v,)$ there are
\begin{itemize}
\item $a$ squids of type (i) such that its body is not adjacent with $v$,
\item $b$ squids of type (i) such that its body is adjacent with $v$,
\item $c$ squids of type (ii).
\end{itemize}
The squids of type (i) has arms at one level and squids of type (ii)
at two levels, so we have to prove that $a+b+2c<q$. Counting vertices
around $v$ in $G$ gives that $a \leq  | N^2(v) |  $
and $ b+c \leq |N(v)| $,
which using the assumption proves that
$ a+b+2c \leq a+2(b+c) =     | N^2(v) |  + 2|N(v)| < q .$

\end{proof}

\begin{theorem}\label{squidTheorem}
Let $G$ be a graph with more than $m$ vertices
and\linebreak $S_1, S_2, \ldots, S_{m}$ squids in $G\square K_q$ with different
bodies. If
$q> |N^2(v)| + 2|N(v)|$
for every vertex $v$ of $G$,
then ${\tt Ind}(G\square K_q\setminus \cup_{i=1}^m S_i)$
is $(|V(G)|-m-2)$--connected.
\end{theorem}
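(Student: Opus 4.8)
The plan is to induct on the quantity $|V(G)|-m$. When $|V(G)|-m=1$ the asserted connectivity is $(-1)$--connectivity, i.e. non-emptiness, which is exactly Lemma~\ref{squidLemma}; this is the base case. For the inductive step set $H=G\square K_q\setminus\cup_{i=1}^m S_i$ and $n=|V(G)|-m-2$. Since the $m$ squids have distinct bodies and $|V(G)|>m$, there is a vertex $v$ of $G$ that is not the body of any $S_i$, and this is the vertex on which I would reduce.

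First I would record three identifications that make the two squid types do their work. Write $C$ for the set of surviving vertices of the column $\{v\}\times\{1,\ldots,q\}$ in $H$; since this column is a clique in $G\square K_q$, every face of ${\tt Ind}(H)$ meets $C$ in at most one vertex. Then ${\tt Ind}(H\setminus C)={\tt Ind}((G\setminus v)\square K_q\setminus\cup_{i=1}^m S_i)$ is a configuration with one fewer vertex and the same $m$ squids, so it is $(n-1)$--connected by induction. For each surviving $(v,k)$ the set $N((v,k))\cup\{(v,k)\}$ equals $\{v\}\times\{1,\ldots,q\}\cup N_G(v)\times\{k\}$, which is a type (i) squid with body $v$; hence ${\tt Ind}(H\setminus(N((v,k))\cup\{(v,k)\}))$ is $H$ with $m+1$ distinct-body squids removed and is $(n-1)$--connected by induction. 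Finally, for two surviving levels $k,k'$ the union $(N((v,k))\cup\{(v,k)\})\cup(N((v,k'))\cup\{(v,k')\})$ equals $\{v\}\times\{1,\ldots,q\}\cup N_G(v)\times\{k,k'\}$, a type (ii) squid with body $v$, so the corresponding complex is again $(n-1)$--connected. These are precisely the closed star, the link, and the pairwise intersection of links of the vertices of $C$, and the inductive hypothesis applies in each case because the removed neighborhood is genuinely a squid with the fresh body $v$, keeping the bodies distinct and the vertex count above the new number of squids.

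With these pieces in hand I would assemble ${\tt Ind}(H)$ from the $(n-1)$--connected complex ${\tt Ind}(H\setminus C)$ by attaching the contractible closed stars of the vertices of $C$ along their links. Lemma~\ref{red1} is the gluing step that controls a single such attachment, while Lemma~\ref{red2} (Bj\"orner's wedge--of--suspensions lemma) is the tool I would adapt to produce the extra unit of connectivity: the suspensions of the $(n-1)$--connected links are $n$--connected, and the type (ii) squids are exactly the pairwise link intersections that bookkeep the overlaps. The aim is that attaching the stars raises the connectivity of ${\tt Ind}(H\setminus C)$ from $n-1$ to $n$, possibly through an iterated application of the two lemmas over the vertices of $C$.

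The hard part will be the bookkeeping of this connectivity increment rather than any single identification. Concretely, I expect the delicate points to be (i) guaranteeing that enough vertices of $C$ survive, with enough distinct levels, for the attached stars to actually fill the top homotopy of ${\tt Ind}(H\setminus C)$, and (ii) controlling the higher intersections of the stars, which involve more than two levels and are no longer squids. Both are exactly where the hypothesis $q>|N^2(v)|+2|N(v)|$ enters: the same counting that proves Lemma~\ref{squidLemma} bounds how many levels of the column of $v$, and how many level-$k$ neighbors of $v$, the squids can consume, and thereby forces the surviving clique $C$ and its links to be large and highly connected enough for the gluing to gain the one dimension of connectivity we need.
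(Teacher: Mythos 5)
Your base case, your choice of a fresh body $v$, and your identifications of the links and pairwise star intersections of column vertices as complements of type (i) and type (ii) squids with body $v$ are all correct and overlap with ingredients of the actual proof. But the inductive step is not completed, and the assembly you sketch does not go through. You decompose ${\tt Ind}(H)$ as ${\tt Ind}(H\setminus C)$ together with the closed stars of the vertices of $C$. To gain the unit of connectivity you must invoke Lemma~\ref{red2} (Bj\"orner's wedge--of--suspensions lemma), but that lemma requires \emph{every} piece of the decomposition, including $\Delta_0$, to be contractible; your $\Delta_0={\tt Ind}(H\setminus C)$ is only $(n-1)$--connected. You cannot drop $\Delta_0$ either, because the closed stars of the $C$--vertices alone do not cover ${\tt Ind}(H)$: a face containing a $G$--neighbor of $v$ at every surviving level lies in none of them. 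Nor can Lemma~\ref{red1} start the gluing, since it needs the vertex-deleted complex to already be $n$--connected, whereas ${\tt Ind}(H\setminus C)$ is only $(n-1)$--connected. You flag this as ``bookkeeping,'' but it is the actual content of the proof. (Your worry about triple intersections is a non-issue, as Bj\"orner's lemma only needs pairwise intersections to lie in $\Delta_0$; on the other hand your claim that ${\tt Ind}(H\setminus C)$ is handled by induction on $G\setminus v$ is itself shaky, since a type (i) squid whose distinguished adjacent pair involves $v$ need not restrict to a squid of $(G\setminus v)\square K_q$.)

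The missing idea is the paper's two-stage reduction, which arranges for all pieces to be contractible before Lemma~\ref{red2} is applied. Pick one surviving column vertex $(u,i)$ and first delete its level-$i$ neighbors $N_G(u)\times\{i\}$, obtaining an intermediate graph $H'$. In $H'$ the vertex $(u,i)$ has a clique neighborhood (the surviving rest of its column), so Lemma~\ref{red2} applies with $\Delta_0$ and all $\Delta_j$ equal to closed stars of column vertices --- genuine cones, hence contractible --- and the pairwise intersections are exactly your type (ii) squid complements; this yields $n$--connectivity of ${\tt Ind}(H')$. Then the deleted vertices $(w_1,i),\ldots,(w_t,i)$ are restored one at a time with Lemma~\ref{red1}; at each step the relevant link is the complement of a type (i) squid whose body is $w_l\in N_G(u)$, not $v$. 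So the type (i) squids are used for a different purpose (and with different bodies) than in your sketch, and it is precisely the intermediate complex $H'$ --- absent from your proposal --- that makes the connectivity increment possible.
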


\begin{proof}
The proof is by induction on $|V(G)|-m$, and we start
with $m=|V(G)|-1$. We are to prove that 
$G\square K_q\setminus \cup_{i=1}^m S_i$
is non-empty, since $|V(G)|-m-2=-1$, and
${\tt Ind}(G\square K_q\setminus \cup_{i=1}^m S_i)$
is $(-1)$--connected whenever it is non-empty.
This is true by Lemma~\ref{squidLemma}. 

Now assume that $m<|V(G)|-1$. Pick a vertex $(u,i)\in G\square K_q \setminus \cup_{i=1}^m S_i$.
We can do that by Lemma~\ref{squidLemma}.
Let $H=G\square K_q \setminus \cup_{i=1}^m S_i$
and $H'=H\setminus N_G(u)\times\{i\}$.
The first step is to show that ${\tt Ind}(H')$ is $(|V(G)|-m-2)$--connected. If $(u,i)$
is isolated in $H'$ then ${\tt Ind}(H')$ is contractible and in particular
$(|V(G)|-m-2)$--connected. Otherwise $N_{H'}((u,i))$ is a complete graph and by 
Lemma~\ref{red2} it is sufficient to show that
\[{\tt Ind}(H'\setminus ( N_{H'}((u,i)) \cup N_{H'}((u,j)) )) \]
is $(|V(G)|-m-2-1)$--connected for all $j$ such that $(u,j)\in N_{H'}((u,i))$. If
we let $S'_j$ be the squid
\[ N_G(u)\times\{i,j\} \cup \{u\}\times\{1,2,\ldots, q\}, \]
then the independence complex of
$H'\setminus ( N_{H'}((u,i)) \cup N_{H'}((u,j)) ) = H \setminus S'_j$
is $(|V(G)|-m-2-1)$--connected by induction. Now we know that
${\tt Ind}(H\setminus N_G(u)\times\{i\})$ is $(|V(G)|-m-2)$--connected. 

Let
$t=|N_G(u)\times \{i\} \cap H|$, and $\{(w_1,i),(w_2,i),\ldots,(w_t,i)\}
=N_G(u)\times \{i\} \cap H$. Also let
\[H''_l =H\setminus \{(w_1,i),(w_2,i),\ldots,(w_l,i)\} \]
for $0\leq l\leq t$. Note that $H_0''=H$ and $H_t''=H'=H\setminus N_G(u)\times \{i\}$.
For $0<l\leq t$ let $S''_l$ be the type (i) squid
\[ (\{w_1,w_2,\ldots,w_l\}\cup N_G(w_l))\times \{i\} \cup
   \{w_l\}\times \{1,2,\ldots, q\}. \]
As a preparation to use Lemma~\ref{red1}, note that for
$0<l\leq t$,
$H''_l=H''_{l-1}\setminus \{(w_l,i)\}$ and $
H''_{l-1}\setminus ( N_{H''_{l-1}}((w_l,i))\cup\{(w_l,i)\})= H\setminus S_l''.$
By induction all ${\tt Ind}(H\setminus S_l'')$ are $(|V(G)|-m-2-1)$--connected.
From earlier ${\tt Ind}(H''_t)={\tt Ind}(H')$ is $(|V(G)|-m-2)$--connected. By Lemma~\ref{red1}
used with vertex $(w_t,i)$ on ${\tt Ind}(H''_{t-1})$ we get that 
${\tt Ind}(H''_{t-1})$ is $(|V(G)|-m-2)$--connected. Repeated use of Lemma~\ref{red1} for
$l=t-1,t-2,\ldots,1$ on  ${\tt Ind}(H''_{l-1})$ using the vertex $(w_l,i)$ proves
that ${\tt Ind}(H''_0)={\tt Ind}(H)$ is $(|V(G)|-m-2)$--connected.
\end{proof}

\begin{corollary}~\label{connCor}
If $q> |N^2(v)| + 2|N(v)|$ for every vertex $v$ of $G$,
then ${\tt Ind}(G\square K_q)$
is $(|V(G)|-2)$--connected.
\end{corollary}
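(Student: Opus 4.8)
The plan is to recognize the corollary as the special case $m=0$ of Theorem~\ref{squidTheorem}. When the collection of squids is empty, the union $\cup_{i=1}^{m} S_i$ is empty, so $G\square K_q\setminus\cup_{i=1}^{m} S_i = G\square K_q$, and the connectivity bound $(|V(G)|-m-2)$ specializes to $(|V(G)|-2)$. This is exactly the conclusion we want, so the entire argument reduces to invoking the theorem with no squids removed.

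The only hypothesis I would need to check is that $G$ has more than $m=0$ vertices, i.e. that $G$ is non-empty; this I may assume, since the statement is otherwise vacuous. The degree condition $q> |N^2(v)| + 2|N(v)|$ for every vertex $v$ is identical in the corollary and in the theorem, so it transfers directly with no additional work, and the hypotheses of Theorem~\ref{squidTheorem} are met.

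I expect essentially no obstacle here: all of the substantive content resides in the inductive proof of Theorem~\ref{squidTheorem}, and this corollary simply records the $m=0$ endpoint of that induction. Its role is organizational rather than technical, packaging the connectivity estimate in the clean form $(|V(G)|-2)$--connected so that it can be fed, via $N+1=|V(G)|=(d+1)(q-1)+1$, directly into Theorem~\ref{modTver} to conclude that $G$ is a $(d,q)$--Tverberg graph.
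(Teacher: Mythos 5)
Your proposal is correct and is exactly the paper's proof: the corollary is Theorem~\ref{squidTheorem} with $m=0$ (no squids), and the connectivity bound $(|V(G)|-m-2)$ specializes to $(|V(G)|-2)$. Your extra check that $G$ is non-empty is a harmless bit of care the paper leaves implicit.
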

\begin{proof}
Use no squids in Theorem~\ref{squidTheorem}.
\end{proof}

\subsection{Proof of the main Theorem}

Now we have collected enough tools to prove the theorem and corollary stated in the introduction.

\begin{theorem}\label{mainTheorem}
Let $q\geq 2$ be a prime power, $d\geq 1$, and $G$ a
graph on $(d+1)(q-1)+1$ vertices such that
\[ q> |N^2(v)| + 2|N(v)| \]
for every vertex $v$. Then $G$ is a $(d,q)$--Tverberg graph.
\end{theorem}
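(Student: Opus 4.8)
The plan is to assemble the theorem directly from the two ingredients already in place, namely Corollary~\ref{connCor} and Theorem~\ref{modTver}. Together these cover exactly the two halves of what must be shown: a connectivity estimate for ${\tt Ind}(G\square K_q)$, and the topological reduction that turns such connectivity into the Tverberg property. The only genuine content of the present argument is to check that the numerical bookkeeping lines up, so that the connectivity guaranteed by Corollary~\ref{connCor} is precisely the hypothesis demanded by Theorem~\ref{modTver}.

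First I would introduce $N=(d+1)(q-1)$, so that the assumption that $G$ has $(d+1)(q-1)+1$ vertices reads simply as $|V(G)|=N+1$. The numerical hypothesis $q>|N^2(v)|+2|N(v)|$ is identical to that of Corollary~\ref{connCor}, so I may invoke it verbatim to conclude that ${\tt Ind}(G\square K_q)$ is $(|V(G)|-2)$--connected. Substituting $|V(G)|=N+1$, this says exactly that ${\tt Ind}(G\square K_q)$ is $(N-1)$--connected.

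This is precisely the hypothesis of Theorem~\ref{modTver} for a graph on $N+1=(d+1)(q-1)+1$ vertices. Applying that theorem then yields immediately that $G$ is a $(d,q)$--Tverberg graph, completing the argument.

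I do not anticipate a real obstacle at this stage: the hard work has already been carried out in the squid reduction of Theorem~\ref{squidTheorem} leading to Corollary~\ref{connCor}, and in the $\mathbb{Z}_p\times\cdots\times\mathbb{Z}_p$--equivariant topology underlying Theorem~\ref{modTver}. The one point that deserves a remark is why the vertex count must be exactly $(d+1)(q-1)+1$: it is this value that makes the identity $|V(G)|-2=N-1$ hold, so that Corollary~\ref{connCor} delivers neither more nor less connectivity than Theorem~\ref{modTver} requires, with no gap to bridge.
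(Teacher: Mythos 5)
Your proposal is correct and follows exactly the paper's own proof: invoke Corollary~\ref{connCor} to get $(|V(G)|-2)$--connectedness of ${\tt Ind}(G\square K_q)$, note that $|V(G)|-2=N-1$ for $N=(d+1)(q-1)$, and conclude via Theorem~\ref{modTver}. The extra remark about why the vertex count makes the connectivity bounds match is a helpful clarification but does not change the argument.
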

\begin{proof}
By Corollary~\ref{connCor}, ${\tt Ind}(G\square K_q)$ is 
$(|V(G)|-2)$--connected if
$q> |N^2(v)| + 2|N(v)|$
 for every vertex $v$ of $G$,
and by Theorem~\ref{modTver}
we are done.
\end{proof}

\begin{corollary}\label{mainCorollary}
Let $q\geq 2$ be a prime power, $d\geq 1$, and $G$ a
graph on $(d+1)(q-1)+1$ vertices such that its maximal
degree $D$ satisfy $D(D+1)<q$. Then $G$ is a $(d,q)$--Tverberg graph.
\end{corollary}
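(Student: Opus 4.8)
The plan is to derive Corollary~\ref{mainCorollary} directly from Theorem~\ref{mainTheorem} by showing that the maximal-degree hypothesis $D(D+1)<q$ is strictly stronger than the local hypothesis $q>|N^2(v)|+2|N(v)|$. Since Theorem~\ref{mainTheorem} already does all the topological work, the only task here is an elementary counting argument bounding $|N(v)|$ and $|N^2(v)|$ in terms of $D$.

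First I would fix an arbitrary vertex $v$ and bound its two relevant neighborhood sizes. Since $D$ is the maximal degree, every vertex has at most $D$ neighbors, so $|N(v)|\leq D$. For the second neighborhood, each vertex $u\in N(v)$ contributes at most its own neighbors to $N^2(v)$, but one of those neighbors is $v$ itself, which is explicitly excluded from $N^2(v)$; hence each $u$ contributes at most $D-1$ vertices. Because $|N(v)|\leq D$, this gives the crude but sufficient bound $|N^2(v)|\leq D(D-1)$.

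Next I would combine these two estimates. Substituting into the expression from the main theorem,
\[ |N^2(v)|+2|N(v)| \leq D(D-1)+2D = D^2+D = D(D+1). \]
By hypothesis $D(D+1)<q$, so $|N^2(v)|+2|N(v)|<q$ for every vertex $v$, which is exactly the condition required by Theorem~\ref{mainTheorem}. Applying that theorem then yields that $G$ is a $(d,q)$--Tverberg graph.

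I do not expect any genuine obstacle here, since the argument is purely combinatorial bookkeeping; the substantive content lives entirely in Theorem~\ref{mainTheorem} and the connectivity machinery preceding it. The only point requiring a little care is the bound $|N^2(v)|\leq D(D-1)$ rather than the naive $D^2$: one must remember to subtract the edge back to $v$ when counting the neighbors of each $u\in N(v)$. This refinement is what makes the clean inequality $D(D+1)<q$ come out exactly, and it is worth stating explicitly so the reader sees why the corollary's bound takes the form $D(D+1)$ rather than a weaker quadratic.
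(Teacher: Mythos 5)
Your proposal is correct and is essentially identical to the paper's own proof: both deduce the corollary from Theorem~\ref{mainTheorem} via the bounds $|N(v)|\leq D$ and $|N^2(v)|\leq D(D-1)$ together with the identity $D(D-1)+2D=D(D+1)$. Your version just spells out the counting behind $|N^2(v)|\leq D(D-1)$ a bit more explicitly.
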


\begin{proof}
It follows directly from the equality $D(D-1)+2D=D(D+1)$, and the inequalities
$D(D-1) \geq  |N^2(v)| $ and $ D \geq |N(v)|.$
\end{proof}

We belive that the bound $D(D+1)<q$ can
be improved quite much. For which $t$ is
there a constant $k$ such that the
bound $D^t<kq$ proves that it is a 
Tverberg graph? This conjecture is 
perhaps too optimistic:

\begin{conjecture}
There is a constant $k$ such that if the
maximal degree of $G$ is less than $kq$ 
then $G$ is a $(d,q)$--Tverberg graph.
\end{conjecture}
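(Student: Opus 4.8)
\emph{The plan is to} keep the reduction of Theorem~\ref{modTver} but to attack the connectivity of ${\tt Ind}(G\square K_q)$ by a decomposition that does not pay for second neighborhoods. First I would isolate exactly where the present method loses a square. In Theorem~\ref{squidTheorem} the complex is peeled one column at a time, and peeling the neighbors $w_1,\dots,w_t$ of a chosen column $u$ removes, via Lemma~\ref{red1}, the entire neighborhood $N_G(w_l)$ of each neighbor at a single level. This is what creates the type~(i) squids whose arms reach distance two from $u$, and hence the term $|N^2(v)|\leq D(D-1)$ in Lemma~\ref{squidLemma}. For a locally tree-like $D$-regular graph this term is genuinely $\Theta(D^2)$, so even Theorem~\ref{mainTheorem} cannot beat $D=O(\sqrt q)$ on such graphs. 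The conjecture would follow if ${\tt Ind}(G\square K_q)$ nonetheless stayed $(|V(G)|-2)$--connected up to $D=\Theta(q)$, so the whole question is whether that square can be removed.

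Two complementary routes suggest themselves. The first is to replace the neighbor-by-neighbor peeling with a matching charged only to first neighborhoods. Since a face of ${\tt Ind}(G\square K_q)$ is exactly a partial proper $q$--coloring of $G$ (at most one level per column, and no $G$--edge monochromatic), I would build a discrete Morse matching that toggles colors, in a fixed order, at the least available vertex, and then prove that the resulting critical cells all lie in the top dimension $|V(G)|-1$. Because toggling one color at a vertex $v$ conflicts only with $N_G(v)$ at that single level, the obstructions to such a matching are genuinely distance-one data, which is what one needs to trade $|N^2(v)|+2|N(v)|$ for something like $c\,|N(v)|$. The second route is to abandon the connectivity of this particular complex and import the constraint method of Blagojevi\'c, Matschke and Ziegler \cite{BLZ1,BLZ2}: set up the $\mathbb{Z}_p^r$--equivariant configuration-space/test-map scheme for $q=p^r$ directly, encode each edge of $G$ as an affine constraint, and reduce to the nonvanishing of a mod-$p$ equivariant class (Fadell--Husseini index or Euler class) of the constrained space. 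Here a disjoint union of components becomes a join, so indices multiply and one may localize the estimate at a single low-degree neighborhood; with $D<kq$ each vertex imposes only $O(D)$ constraints, which is the regime in which one hopes the class survives.

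The step I expect to be the main obstacle is proving that the connectivity, or the nonvanishing of the equivariant class, actually persists to linear degree for locally tree-like graphs, since this is exactly the family for which every distance-two bound is tight. For such graphs there is no a priori reason that ${\tt Ind}(G\square K_q)$ remains a wedge of top-dimensional spheres, so a successful Morse matching would have to certify the vanishing of possibly subtle lower homology uniformly in the dimension $d$, whose vertex count $(d+1)(q-1)+1$ grows without bound, and a successful index computation would have to be carried out for an arbitrary sparse constraint pattern rather than the highly symmetric complete-graph case that \cite{BLZ1,BLZ2} exploit. Finally, whichever route is taken, the prime-power case $r>1$ adds the usual difficulty that the $(\mathbb{Z}_p)^{r}$--index is markedly harder to control than the $\mathbb{Z}_p$--index of the prime case; settling that is ultimately what would pin down, or rule out, a positive constant $k$.
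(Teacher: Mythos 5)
The statement you are addressing is Conjecture 3.9 of the paper, and the paper does not prove it: it is stated as an open problem, with the author remarking just before it that it is ``perhaps too optimistic'' (the Note added in proof concerns the other conjecture, Conjecture 3.10 on shellability, not this one). So there is no proof in the paper to compare yours against, and the only fair question is whether your text constitutes a proof. It does not. What you have written is a research program. Your diagnosis of the bottleneck is accurate: the type~(i) squids in the proof of Theorem~\ref{squidTheorem} have arms at distance two from the peeled column, which forces the $|N^2(v)|$ term in Lemma~\ref{squidLemma}, and for $D$-regular graphs of girth at least five $|N^2(v)|=D(D-1)$ exactly, so the criterion of Theorem~\ref{mainTheorem} genuinely cannot go past $D=O(\sqrt{q})$. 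But neither of your proposed routes is carried out. For the discrete Morse route, the entire content of the conjecture is hidden in the step you defer: constructing an acyclic matching on ${\tt Ind}(G\square K_q)$ (equivalently, on the poset of partial proper $q$-colorings of $G$) whose critical cells are all top-dimensional when $D<kq$. You give no construction, no acyclicity argument, and no control of lower homology; indeed you concede there is no a priori reason the complex remains $(|V(G)|-2)$-connected in this regime, and if that connectivity fails, Theorem~\ref{modTver} gives nothing --- and since Theorem~\ref{modTver} is only a sufficient criterion, its failure would not even refute the conjecture. For the equivariant route, importing \cite{BLZ1,BLZ2} requires computing a Fadell--Husseini index or equivariant Euler class for a test-map scheme cut out by an arbitrary sparse constraint graph, and for prime powers $q=p^r$ rather than just primes; you explicitly flag this computation as the unsolved obstacle.

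In short: you correctly identify where the quadratic loss occurs and propose two reasonable lines of attack, but every step that would distinguish a proof from a plan is missing, and you say so yourself. That is not a defect you could have repaired by reading the paper more closely --- the author leaves the conjecture open as well --- but as a proof attempt the proposal has a genuine gap at exactly the point where a new idea is required.
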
 

Some skeleton of chessboard complexes, and in
particular the chessboard complexes which are
homotopy wedges of spheres, are
shellable as proved by Ziegler \cite{Z}. His
technique can be extended to prove that
${\tt Ind}(G\square K_q)$ is shellable for certain
$G$ that are subgraphs of disjoint unions 
of small complete graphs. 

\begin{conjecture}
If $G$ is a $(d,q)$--Tverberg graph from Theorem \ref{mainTheorem}, then
${\tt Ind}(G\square K_q)$ is pure and shellable.
\end{conjecture}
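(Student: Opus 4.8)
The plan is to prove the stronger statement that ${\tt Ind}(G\square K_q)$ is \emph{vertex decomposable}, which by Provan--Billera (see Bj\"orner \cite{B}) implies shellable, and to extract purity along the way. First I would fix the dictionary: a face of ${\tt Ind}(G\square K_q)$ is exactly a partial proper $q$--colouring of $G$, a facet is a colouring to which no further vertex can be added, and when $G$ is complete the complex is the chessboard complex, so the statement extends Ziegler's theorem \cite{Z}. \emph{Purity} is the easy half: the hypothesis gives $q>|N^2(v)|+2|N(v)|\ge 2\deg(v)$, so $q\ge\deg(v)+1$ for every $v$ and thus $q$ exceeds the maximal degree $D$; hence a vertex left uncoloured by a partial proper colouring has at most $D<q$ forbidden colours and can always be coloured. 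Therefore every facet is a \emph{total} proper colouring, has $|V(G)|$ vertices, and ${\tt Ind}(G\square K_q)$ is pure of dimension $|V(G)|-1=(d+1)(q-1)$.

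For shellability I would rerun the squid induction of Theorem~\ref{squidTheorem} with identical bookkeeping, but upgrade its two homotopy inputs to vertex decomposability. Concretely, I would prove the refined claim: \emph{if $q>|N^2(v)|+2|N(v)|$ for every $v$ and $S_1,\dots,S_m$ are squids with distinct bodies ($|V(G)|>m$), then ${\tt Ind}(G\square K_q\setminus\bigcup_i S_i)$ is pure of dimension $|V(G)|-m-1$ and vertex decomposable.} The purity assertion is the key new ingredient: each squid body becomes permanently uncolourable and so costs exactly one dimension, while the arms only forbid finitely many colours at isolated vertices and levels, and the count in Lemma~\ref{squidLemma} shows no vertex ever accumulates $q$ forbidden colours; hence every maximal colouring still colours all $|V(G)|-m$ non--body vertices. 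The induction is on $|V(G)|-m$ with base case $m=|V(G)|-1$, where Lemma~\ref{squidLemma} gives a nonempty pure $0$--dimensional complex, trivially vertex decomposable.

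The inductive step needs vertex--decomposable analogues of Lemmas~\ref{red1} and~\ref{red2}. Lemma~\ref{red1} is already the shedding step in disguise: for a vertex $v$ of $H$, the complex ${\tt Ind}(H\setminus v)$ is the deletion ${\tt Ind}(H)\setminus v$ and ${\tt Ind}(H\setminus(N(v)\cup\{v\}))$ is the link $\mathrm{lk}(v)$, so ``both pure vertex decomposable, of dimensions $n$ and $n-1$'' yields ${\tt Ind}(H)$ pure vertex decomposable of dimension $n$ with shedding vertex $v$, the purity of the deletion encoding the shedding condition. Using this exactly as in Theorem~\ref{squidTheorem} --- shedding the vertices $(w_l,i)$ one at a time, each reduction landing on a complex ${\tt Ind}(H\setminus S''_l)$ covered by the induction hypothesis --- reduces everything to the situation of Lemma~\ref{red2}, where the surviving neighbourhood of $(u,i)$ is a clique.

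The main obstacle is the vertex--decomposable upgrade of Lemma~\ref{red2}: its homotopy content is a wedge of suspensions and does not by itself produce a shelling. I would instead peel the clique neighbourhood $\{(u,j)\}$ of $(u,i)$ one vertex at a time, shedding each $(u,j)$ and checking that both the resulting deletion and link are pure of the predicted dimension and, via the refined claim applied to the squids $S'_j=N_G(u)\times\{i,j\}\cup\{u\}\times\{1,\dots,q\}$, vertex decomposable. The delicate points throughout are the purity and dimension bookkeeping: at every shedding one must verify that the deletion stays pure of full dimension --- so the shedding condition ``no facet of the link is a facet of the deletion'' holds --- and that each intermediate graph-minus-squids still satisfies the counting bound of Lemma~\ref{squidLemma}. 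I expect establishing this clique--peeling lemma with all purity conditions intact, rather than the mere connectivity the paper currently extracts, to be where the real work lies.
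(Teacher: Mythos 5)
First, a point of order: this statement is a \emph{conjecture} in the paper, not a theorem --- the paper contains no proof of it at all (the note added in proof records that it was later settled in \cite{EN}). So your proposal cannot be compared with an argument in the text; it has to stand on its own. Judged that way, it is a credible plan whose hardest step is missing, as you yourself concede. The parts you carry out are sound: the dictionary ``faces of ${\tt Ind}(G\square K_q)$ = partial proper $q$--colourings of $G$'' is correct; the purity argument is correct and complete (the hypothesis gives $q>2|N(v)|$, hence $q$ exceeds the maximal degree, so every maximal partial colouring is total and every facet has $|V(G)|$ vertices); and your reading of Lemma~\ref{red1} as a shedding step is right --- if the deletion is pure vertex decomposable of dimension $n$ and the link is pure vertex decomposable of dimension $n-1$, the shedding condition holds automatically for dimension reasons. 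Identifying the link of $(u,j)$ with the squid complex ${\tt Ind}(H\setminus S'_j)$, so that the induction hypothesis applies to it, is also correct.

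The genuine gap is the vertex--decomposable analogue of Lemma~\ref{red2}, which you explicitly defer (``where the real work lies''), and it is not merely unfinished: the bookkeeping it needs does not follow from the count you invoke. Lemma~\ref{squidLemma}'s estimate bounds (slots removed at a vertex $u$) plus (surviving $G$--neighbours of $u$) by $|N^2(u)|+2|N(u)|\le q-1$; but each step of your clique peeling deletes one more slot at $u$ (the vertex $(u,j)$ itself), and the same estimate then only gives $\le q$. At the boundary of the hypothesis --- say $|N^2(u)|+2|N(u)|=q-1$ with all squids at $u$ of type (ii) and bodies adjacent to $u$ --- a maximal colouring of the deletion can block every remaining slot of $u$ other than $j$ while leaving level $j$ of $N_G(u)$ empty; such a facet of the deletion is not a facet of the whole complex, and then $(u,j)$ is not a shedding vertex. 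So the shedding condition for the clique--peeling step cannot be extracted from the counting lemma as it stands; one needs either a strengthened numerical hypothesis, a cleverer choice and ordering of shedding vertices, or the non--pure formulation of vertex decomposability, and you supply none of these. Until that lemma is actually proved, what you have is an outline of a plausible attack (indeed in the spirit of how the conjecture was eventually resolved in \cite{EN}), not a proof.
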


\section{The affine version}

Let $\textrm{conv}(F)$ of a finite subset
$F$ of $\mathbb{R}^d$ denote the convex hull
of $F$. The Tverberg theorem states that:

\begin{theorem}[Tverberg \cite{T1}]
  For any $d\geq 1$ and $q \geq 2$, any
  set of $(d+1)(q-1)+1$ points in $\mathbb{R}^d$
  can be partitioned into $q$ disjoint subsets
  $F_1, F_2, \ldots F_q$ such that 
  $\textrm{\emph{conv}}(F_1)\cap\cdots \cap \textrm{\emph{conv}}(F_q)
   \neq \emptyset $.
\end{theorem}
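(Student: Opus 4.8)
The plan is as follows. For a prime power $q$ the statement is immediate from the topological Tverberg theorem recalled in the introduction: given points $p_0,p_1,\ldots,p_N$ with $N=(d+1)(q-1)$, let $f$ be the affine map from the $N$--simplex to $\mathbb{R}^d$ sending the $i$-th vertex to $p_i$. The topological theorem produces disjoint faces $F_1,\ldots,F_q$ with $\bigcap_i f(F_i)\neq\emptyset$, and since $f$ is affine each $f(F_i)$ equals $\textrm{conv}(\{p_j : j\in F_i\})$, which is exactly a Tverberg partition. The difficulty is that the affine statement is asserted for \emph{every} $q\geq 2$, whereas the topological theorem is only available for prime powers. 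I would therefore give a uniform argument that does not go through the topological theorem, namely Sarkaria's tensor reduction to the colorful Carath\'eodory theorem (both in \cite{M}).

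First I would lift the data to one higher dimension, replacing each $p_i\in\mathbb{R}^d$ by $\hat p_i=(p_i,1)\in\mathbb{R}^{d+1}$; a common point of convex hulls downstairs will be recovered from the first $d$ coordinates of a common point upstairs, the last coordinate being used to control the weights. Next I would fix vectors $v_1,\ldots,v_q\in\mathbb{R}^{q-1}$ summing to zero and with any $q-1$ of them linearly independent, so that the only linear relation among them is a multiple of $v_1+\cdots+v_q=0$. Tensoring gives, for each index $i$, a colour class $X_i=\{\hat p_i\otimes v_j : 1\leq j\leq q\}\subset \mathbb{R}^{(d+1)(q-1)}$, and because the $v_j$ sum to zero the origin lies in $\textrm{conv}(X_i)$ for every $i$.

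Now there are exactly $(d+1)(q-1)+1$ colour classes living in $\mathbb{R}^{(d+1)(q-1)}$, so the colorful Carath\'eodory theorem applies and yields a transversal $\hat p_i\otimes v_{j(i)}$, one vertex from each $X_i$, whose convex hull still contains the origin. Writing the resulting dependence $\sum_i \lambda_i\,(\hat p_i\otimes v_{j(i)})=0$ with $\lambda_i\geq 0$ and $\sum_i\lambda_i=1$, I would group the indices by the chosen colour into $F_k=\{i : j(i)=k\}$ and rewrite the relation as $\sum_{k=1}^q w_k\otimes v_k=0$ with $w_k=\sum_{i\in F_k}\lambda_i\hat p_i\in\mathbb{R}^{d+1}$. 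The dependence assumption on the $v_k$ forces all $w_k$ to be equal to a single vector $c$; reading off the last coordinate shows each part $F_k$ carries weight $1/q>0$ (hence is nonempty), and the first $d$ coordinates exhibit the common point $q\cdot c'\in\bigcap_k\textrm{conv}(\{p_i : i\in F_k\})$, where $c'$ denotes the first $d$ coordinates of $c$.

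The main obstacle is the final unravelling: one must check that the colorful Carath\'eodory output really does descend to a \emph{partition} into $q$ \emph{nonempty} parts. This is precisely where the exact count $(d+1)(q-1)+1$ and the lifting with a constant last coordinate are indispensable---the former so that colorful Carath\'eodory is applicable with the right number of classes, the latter so that the weights on each part are forced to be equal and strictly positive rather than possibly zero. Verifying that the only linear relation among $v_1,\ldots,v_q$ is the all-ones one, and tracking the tensor bookkeeping, is routine but is the step where care is needed.
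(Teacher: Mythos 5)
Your argument is correct, but it is not the route the paper takes. The paper treats this statement purely as a citation of Tverberg's original 1966 proof for general $q$, and its only "proof content" is the remark---which you reproduce in your first paragraph---that for prime powers the statement follows from the topological Tverberg theorem by composing with the affine map $\sum a_i v_i \mapsto \sum a_i p_i$; that observation is what the paper actually reuses to derive its Corollary~4.2. You instead supply a complete, uniform proof for every $q\geq 2$ via Sarkaria's tensor trick: lifting to $\mathbb{R}^{d+1}$, tensoring with vectors $v_1,\dots,v_q\in\mathbb{R}^{q-1}$ whose only linear dependence is $v_1+\cdots+v_q=0$, and applying the colorful Carath\'eodory theorem to the $(d+1)(q-1)+1$ colour classes in $\mathbb{R}^{(d+1)(q-1)}$. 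The unravelling is sound: each coordinate of $\sum_k w_k\otimes v_k=0$ forces the coefficient vector into the span of $(1,\dots,1)$, so $w_1=\cdots=w_q$, the last coordinate gives each part weight $1/q>0$ (hence nonemptiness), and the first $d$ coordinates give the common point. What your approach buys is a genuinely self-contained, topology-free proof covering the non-prime-power case that the topological theorem cannot reach; what the paper's approach buys is economy (it needs only the prime-power case anyway, and the affine-map observation is the piece that generalizes to the constrained version in Corollary~4.2). One could also note that your proof, being non-constructive only through colorful Carath\'eodory, does not yield the graph-constrained refinements that are the point of the paper.
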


Note that it was not assumed that $q$ is
a prime power. If $q$ is a prime power then
the theorem is a corollary of the topological
Tverberg theorem. Set $N=(d+1)(q-1)$. Let $v_1, v_2, \ldots v_{N+1}$
be the vertices of the geometrical realization of
a $N$--dimensional simplex and 
$p_1, p_2, \ldots p_{N+1}$ the points
in $\mathbb{R}^d$ from the statement of the Tverberg
theorem. The affine and continuous map
$$ a_1v_1+a_2v_2+\cdots+a_{N+1}v_{N+1}
   \mapsto a_1p_1+a_2p_2+\cdots+a_{N+1}p_{N+1} $$
used in the topological version proves the Tverberg
theorem.

Using the same idea we get an affine version
of Theorem~\ref{mainTheorem} and an improvement
of the Tverberg theorem in the prime power case.

\begin{corollary}\label{affineTheorem}
Let $q\geq 2$ be a prime power, $d\geq 1$, 
and $P$ be a set of $(d+1)(q-1)+1$ points
in $\mathbb{R}^d$. If $G$ is a graph with
$P$ as vertex set and
$q> |N^2(v)| + 2|N(v)|$
for all $v\in P$, then $P$ can be partitioned
into $q$ disjoint subsets
  $F_1, F_2, \ldots F_q$ such that 
  $\textrm{\emph{conv}}(F_1)\cap\cdots \cap \textrm{\emph{conv}}(F_q)
   \neq \emptyset $  and no part $F_i$ contains an edge of $G$.
\end{corollary}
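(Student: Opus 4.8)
The plan is to mimic exactly the deduction of the ordinary Tverberg theorem from the topological one sketched just before the statement, but now to carry the graph constraint through Theorem~\ref{mainTheorem} in place of the plain topological Tverberg theorem. First I would set $N=(d+1)(q-1)$ and fix an identification between the points of $P=\{p_1,\ldots,p_{N+1}\}$ and the vertices $v_1,\ldots,v_{N+1}$ of a geometric $N$--simplex, transporting the graph $G$ onto the simplex vertices along this identification. The degree hypothesis $q>|N^2(v)|+2|N(v)|$ is phrased in terms of $G$ alone, so it holds verbatim for the transported graph, and the simplex is precisely the one appearing in Theorem~\ref{mainTheorem}.

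Next I would introduce the affine, hence continuous, map
\[ f\colon a_1v_1+\cdots+a_{N+1}v_{N+1}\mapsto a_1p_1+\cdots+a_{N+1}p_{N+1} \]
and apply Theorem~\ref{mainTheorem} to $f$. This produces $q$ pairwise disjoint faces $F_1,\ldots,F_q$ of the simplex with $\bigcap_{i=1}^q f(F_i)\neq\emptyset$ and with no two vertices of a single $F_i$ adjacent in $G$. Since $f$ is affine, the image of a face equals the convex hull of the images of its vertices, so under the identification of simplex vertices with points of $P$ each $f(F_i)$ becomes $\mathrm{conv}(F_i)$ for a subset $F_i\subseteq P$. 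Thus the $F_i$ are disjoint subsets of $P$, none of them containing an edge of $G$, whose convex hulls share a common point.

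The one remaining gap is that disjoint faces of the simplex need not cover all $N+1$ vertices, whereas the statement asks for a genuine \emph{partition} of $P$. I would close this by distributing the leftover points one at a time. The crucial numerical input is that the hypothesis forces $|N(v)|<q$ for every $v$, since $q>|N^2(v)|+2|N(v)|\geq|N(v)|$. Hence when a leftover point $v$ is to be placed, its neighbors occupy at most $|N(v)|<q$ of the $q$ current parts, so some part contains no neighbor of $v$ and $v$ may be added to it without creating an edge inside that part. Adding a point only enlarges a convex hull, so the common intersection point survives. Processing the leftover points in turn yields a partition $F_1,\ldots,F_q$ with all the required properties.

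I expect the only genuinely nontrivial step to be this last bookkeeping: one must verify that the greedy placement never gets stuck, which is exactly where the bound $|N(v)|<q$ extracted from the main hypothesis is used, and that enlarging the parts disturbs neither the edge-freeness already secured nor the nonempty intersection of the convex hulls.
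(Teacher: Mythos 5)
Your proposal is correct and takes the same route the paper intends: the paper gives no explicit proof of Corollary~\ref{affineTheorem}, saying only that it follows by ``the same idea'' as the deduction of Tverberg's theorem from the topological version, namely applying Theorem~\ref{mainTheorem} to the affine map $a_1v_1+\cdots+a_{N+1}v_{N+1}\mapsto a_1p_1+\cdots+a_{N+1}p_{N+1}$ so that images of faces become convex hulls. Your additional step of greedily extending the disjoint faces to a genuine partition of $P$, using $q>|N^2(v)|+2|N(v)|\geq |N(v)|$ to find for each leftover point a part free of its neighbors (convex hulls only grow, so the common point survives), correctly fills in a detail the paper leaves implicit.
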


There are no obvious modifications of
the known proofs of Tverbergs theorem that
would imply Corollary~\ref{affineTheorem}.
Using contemporary versions of Bertrand's postulate on the existence of primes between $q$ and $q(1+\varepsilon)$, the prime power condition can be removed, but the number of required points increases.

\section{Tverberg partitions}

According to the Tverberg theorem one can under the right conditions partition the vertices of a
simplex in a way such that the images of the faces of every part intersect. One of the main questions about the Tverberg theorem is: how many such partitions exist?
Some progress have been made \cite{H1,H2,VZ} on a very challenging conjecture by Sierksma on the number of Tverberg partitions
by using topological methods. The following theorem improves on some previous results
which correspond to the $D=1$ case.
\begin{theorem}
Let $D$ be an integer with $D(D+1)$ smaller than a prime power $q$, and $N=(d+1)(q-1)+1$ for some integer $d$.
Then the number of   $(d,q)$--Tverberg partitions are at least
$a_N/b_N$, where $a_N$ is the number of labeled $D$--regular graphs on $N$ vertices, and $b_N$ is
the maximal number of labeled $D$--regular graphs on $N$ vertices on the
same $q$--vertex coloring.
\end{theorem}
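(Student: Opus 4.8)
The plan is to count Tverberg partitions by counting the graphs that certify them via Corollary~\ref{mainCorollary}, then to control the overcounting. The key observation is that a $D$--regular graph $G$ on $N$ vertices satisfies the hypothesis $D(D+1)<q$ of Corollary~\ref{mainCorollary}, so $G$ is a $(d,q)$--Tverberg graph. Fixing any set $P$ of $N$ points in $\mathbb{R}^d$ in sufficiently general position (so that there are no degenerate coincidences), each such $G$ yields, by the affine Corollary~\ref{affineTheorem}, a Tverberg partition of $P$ into $q$ parts $F_1,\ldots,F_q$ in which no part contains an edge of $G$. Recording only the unordered partition into color classes, each labeled $D$--regular graph $G$ thus produces at least one Tverberg partition that is a proper $q$--coloring of $G$.

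First I would make precise the map from graphs to partitions: to each of the $a_N$ labeled $D$--regular graphs $G$ on the vertex set $P$, associate a Tverberg partition $\pi(G)$ whose parts are independent sets of $G$ (equivalently, a proper $q$--coloring of $G$). Next I would bound the fibers of this map. If a single Tverberg partition $\pi$ arises from several graphs $G$, then every such $G$ is $D$--regular and is properly colored by $\pi$; that is, $G$ lies among the labeled $D$--regular graphs whose proper $q$--colorings include $\pi$. The number of graphs mapping to a fixed $\pi$ is therefore at most the number of labeled $D$--regular graphs admitting that particular $q$--coloring as a proper coloring, which is at most $b_N$ by the definition of $b_N$ as the maximum such count over all colorings. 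A standard fiber-counting argument (the number of elements in the image of a map is at least the domain size divided by the maximum fiber size) then gives at least $a_N/b_N$ distinct Tverberg partitions.

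The main obstacle I anticipate is not the counting inequality itself but the bookkeeping that guarantees distinct graphs genuinely force the partitions to be counted correctly, and in particular pinning down exactly what $b_N$ bounds. The subtlety is that the partition $\pi(G)$ delivered by Corollary~\ref{affineTheorem} need not be canonically determined by $G$, and two different graphs might a priori be forced to share the same partition for reasons unrelated to the coloring relation. I would resolve this by noting that the only structural constraint Corollary~\ref{affineTheorem} imposes is precisely that $\pi(G)$ properly colors $G$; hence whatever choice is made, any graph sending to a fixed $\pi$ must be $D$--regular and properly $q$--colored by $\pi$, so the fiber over $\pi$ embeds into the set of $D$--regular graphs for which $\pi$ is a proper coloring. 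Bounding the size of that set uniformly by $b_N$ is exactly the content of the definition, so the fiber bound holds regardless of the choices. The routine remaining point is to ensure the point set $P$ can be taken in general position so that no spurious collinearities inflate or collapse the partitions; this is standard and I would dispatch it with a perturbation remark.
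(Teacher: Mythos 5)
Your proposal is correct and follows essentially the same route as the paper: each labeled $D$--regular graph is a Tverberg graph by the degree condition, hence yields a Tverberg partition that properly $q$--colors it, and the fiber over any fixed partition is bounded by $b_N$, giving $a_N/b_N$ by double counting. The general-position/perturbation remark is unnecessary (the result applies to any point configuration or continuous map), but it does not affect the argument.
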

\begin{proof}
For every labeled $D$--regular graph on $N$ vertices we get at least one partition by Theorem \ref{mainTheorem}, and that partition can only be given by as many
labeled $D$--regular graphs on $N$ vertices with the
same $q$ vertex coloring, since a partition is a $q$ vertex coloring.
\end{proof}

\subsubsection*{Acknowledgements.}

The author thanks Svante Linusson, Vic Reiner, G\"unter Ziegler, and the referees for their comments.

\subsubsection*{Note added in proof.}

Conjecture 3.10 was proved in \cite{EN}.

\end{document}